\documentclass[ 5p]{elsarticle}

\usepackage{hyperref}

\journal{System and Control Letters}

\usepackage{amssymb}
\usepackage{amsthm}
\usepackage{color}
\usepackage{caption}
\usepackage{subcaption}

\usepackage{mathtools}
\mathtoolsset{showonlyrefs=true}


\usepackage{amsfonts}
\usepackage{graphicx}%
\setcounter{MaxMatrixCols}{30}
\providecommand{\U}[1]{\protect\rule{.1in}{.1in}}
\newtheorem{theorem}{Theorem}[section]

\newtheorem{corollary}[theorem]{Corollary}

\newtheorem{definition}[theorem]{Definition}

\newtheorem{proposition}[theorem]{Proposition}



\newcommand{\e}{\varepsilon}

\newcommand{\R}{\mathbb{R}}


\hyphenation{op-tical net-works semi-conduc-tor}
\bibliographystyle{elsarticle-num}

\begin{document}

\begin{frontmatter}

\title{A model for system uncertainty in reinforcement learning}

\author{Ryan Murray and Michele Palladino}
\address{Department of Mathematics, The Pennsylvania State University, University Park, PA, USA}

%
%

\begin{abstract}
This work provides a rigorous framework for studying continuous time control problems in uncertain environments. The framework considered models uncertainty in state dynamics as a measure on the space of functions. This measure is considered to change over time as agents learn their environment. This model can be seem as a variant of either Bayesian reinforcement learning or adaptive control. We study necessary conditions for locally optimal trajectories within this model, in particular deriving an appropriate dynamic programming principle and Hamilton-Jacobi equations.  This model provides one possible framework for studying the tradeoff between exploration and exploitation in reinforcement learning.
\end{abstract}

\begin{keyword}
Dynamic programming, Learning systems, Machine learning, Adaptive control
\end{keyword}

\end{frontmatter}


\section{Introduction}

Recently a lot of attention in the machine learning community has been given to methods for reinforcement learning. This has been rewarded with significant advances in machine learning, e.g. the recent development of computer algorithms to beat human Go players \cite{AlphaGo}. Reinforcement learning can be seen as an extension of classical adaptive control methods \cite{sutton1992}. Roughly speaking, reinforcement learning seeks to solve optimal control problems with limited information about state dynamics and objective values. This article aims to propose and study an optimal control model which is closely related to many problems typical to reinforcement learning.

A common setting for reinforcement learning is the following: one considers a discrete state space, with some Markov (possibly stochastic) transitions between these states, and where the movement from one state to another is affected by a control (these are called \emph{Markov decision processes}). Popular algorithms from reinforcement learning solve this type of problem by iteratively estimating a value function using the dynamic programming principle, and then recovering the optimal control by using the value function (optimal synthesis of the  feedback control). This is known as the \emph{value iteration} algorithm in reinforcement learning. Although many other algorithms, such as policy iteration, Q-learning, temporal difference and policy gradient methods, can also be used, they all rely on similar underlying frameworks. An excellent introduction to the field can be found in \cite{BartoSuttonBook}.

At this point we make a few observations about the reinforcement learning framework. First, the discrete framework, which is very natural to the computer science community, is not very convenient for understanding underlying structure of these systems. For example, the discrete framework is not amenable to characterizing necessary or sufficient conditions, or to understanding realistic convergence rates. Of course the discrete framework is useful theoretically (as one has compactness for free), but the convergence guarantees tend to depend poorly on the number of states (which is overly pessimistic when considering problems with underlying continuum structure). Some excellent works have focused on moving to continuum reinforcement learning problems \cite{Munos,Doya}. These works are naturally focused on algorithmic concerns (i.e. finding appropriate function bases), and less on proving properties about such models.

Second, in the framework of reinforcement learning, very little is assumed about state dynamics or objectives. Some algorithms conduct a \emph{model free} approach, which does not seek to construct a model for underlying state dynamics. Other flavors of the algorithms attempt to model the underlying dynamics of the system; this is known as \emph{model-based reinforcement learning}. A mathematically clear exposition of these two frameworks can be found in \cite{Munos}. In any case, the typical viewpoint is to simply use statistical estimates of these quantities when solving for approximate value functions.

Even in the case of model-based reinforcement learning, it is generally less common within the literature to see algorithms which adapt to, or measure the degree of uncertainty given in estimates of the state dynamics or objective functions. The most relevant works come from the Bayesian reinforcement learning community \cite{Duff,Poupart}, see also \cite{BRL-Survey}. Much of the work in the Bayesian reinforcement learning community focuses on \emph{partially observable Markov decision processes}, or on Gaussian processes. Recently more work has been done to model uncertainties in the context of transfer learning \cite{Killian}, and within the more general Gaussian process literature \cite{PILCO,Chowdhary,Jain2017}. 

The present work seeks to give one possible model for making control decisions which take into account the degree of uncertainty in the state dynamics. In particular, it extends the framework from \cite{PILCO}, as well as other similar frameworks from the Gaussian Process community \cite{Chowdhary,Jain2017}, and provides a rigorous analysis of the same.


The main goal of this work is to propose a framework for optimal control problems which dynamically gather information about state dynamics. We envision this as a toy model for many of the tasks in reinforcement learning. In particular, this provides a first step towards principled exploration in these types of control problems.

There is also a significant literature in the control community regarding control in uncertain settings. We outline a few of these fields in only the briefest of terms. Adaptive control seeks to simultaneously estimate system parameters and choose appropriate controls. Adaptive control is very similar to the standard framework of reinforcement learning \cite{sutton1992}. Robust control aims at constructing a controller which performs well under a variety of uncertainties arising in the  system dynamics. Robust optimal control has been widely studied both using a dynamic programming approach \cite{BasarBook} (along with the closely related $\mathcal{H}^\infty$ control) and using the Pontryagin maximum principle approach \cite{BoltyanskyPoznyak}, \cite{Warga91}, \cite{MP}, \cite{Bettiol-Khalil}. To achieve an ``optimal" reliable controller in presence of uncertainties, two kinds of  approaches are followed:  in the first case, one tries to optimize the worst case performance (within some set of possible system uncertainties). This leads to the classical min-max optimal control problems \cite{Vinter2005}. On the other hand, an alternative strategy for the selection of an optimization criterion involves minimizing the distance from a desired behavior. This second approach leads to the Riemann-Stieltjes optimal control problems \cite{Ross2015}. Such a  framework is similar to ours in that the optimization occurs outside of the averaging, but the focus is more on static parametric models and proving Pontryagin maximum principles. Lastly, dual optimal control seeks to model system uncertainty as a state variable \cite{Feldbaum,KlenskeHennig}. We do not attempt to make any exhaustive coverage of these fields here. We do remark that these fields tend to focus on more restrictive settings (such as linear problems), and on stabilization guarantees. The focus here is slightly different: we attempt to consider a very flexible model of both true state dynamics and the uncertainty associated with those dynamics. We then focus on an online setting where uncertainty is both tracked and decreases over time. In a sense what we do here is really an adaptive control setting with some type of modeled uncertainty. From another viewpoint, one could simply view our work as a rigorous mathematical study of a specific type of Bayesian reinforcement learning.

In this paper, we show the basic  properties of the proposed model, such as the existence of the optimal solution and the regularity of the related value function.  We then provide some \textit{local} relations that the value function has to satisfy. We remark that we do not seek to study algorithms for solutions of this model. This would be a more involved process, and would lie outside of the scope of this work. Such algorithms, for closely related problems, have been proposed in \cite{PILCO}. The goal instead is to consider the types of models that would be most effective in modeling uncertainty within control problems. In the future we plan to propose and test numerical methods for the solution of such problems.
%
%
%
\subsection{Proposed Model and Assumptions}

We consider controlled dynamics that are given by
\begin{displaymath}
  \dot x (s) = f(x(s),u(s))
\end{displaymath}
where $f: \R^n\times \R^m \to \R^n$ is a fixed, but generally \emph{unknown}, function belonging to a suitable set of functions $X$ (more details will be provided in the next section).

We suppose that an agent represents their knowledge of the environment (that is their present knowledge of $f$) as a time-varying probability measure defined on the space of functions $X$. That is, given any subset $E$ of $X$, the agent views the probability that $f \in E$ at time $t$ is given by $\pi(t) (E)$.


The agent's overall goal will be to minimize
\begin{displaymath}
  \min_u \int_0^\infty e^{-\lambda s}J(x(s),u(s)) \,ds.
\end{displaymath}
However, in light of the agent's lack of information, this task is approached using the following rules:
\begin{enumerate}
  \item[(i)]The agent makes decisions in a greedy fashion, optimizing their expected lifetime return given the present information.
  \item[(ii)] The agent passively gathers information over time about the environment. This could be expressed in many ways, but we will assume that learning occurs in a local neighborhood around the present state.
\end{enumerate}

We can then summarize this learning environment with the following problem statement. We suppose that $\tilde x(\cdot)$ represents the \emph{actual} state dynamics, that is
\begin{displaymath}
  \dot{\tilde{x}}(s) = \tilde f(\tilde x(s),\tilde u(s))\qquad s\in[0,t]
\end{displaymath}
where $\tilde u(\cdot)$ is the control that we have picked up to time $t$. At any time $t$ we will define the following minimization problem $(P_{t,\tilde{x}})$:
\begin{equation}
 V(\pi,\tilde x,t) = \min_u \, \mathcal{E}(\pi,\tilde x,t,u),
 \end{equation}
where $\mathcal{E}$ is defined by
 \begin{equation}\label{eqn:control-prob}
   \begin{aligned}
 \mathcal{E}(\pi,\tilde x,t,u)&:=\int_t^\infty \int_X e^{-\lambda s} J(x^f(s),u(s)) \, \pi(df) \,ds \\
  &s.t.\quad \dot x^f(s) = f(x^f(s),u(s)), \quad x^f(t) = \tilde{x}.
\end{aligned}
\end{equation}
This represents solving for an optimal open loop control, given current values for $\tilde x$ and $\pi$. Here $V$ is the value function, which depends on the current value of the state and $\pi$. The cost $\mathcal{E}(\pi,\tilde x,t,u)$ represents the value assuming that the dynamics are given by $f$ with a certain probability $\pi$ and using the information gathered by the previous states $\tilde{x}(t)$. This means that the agent looks forward in time, considering possible future rewards in each of the possible beliefs they have about the environment.
For fixed $t>0$, $x\in \mathbb{R}^{n}$ and a probability $\pi$, a minimizer for  problem (\ref{eqn:control-prob}) is a control $u^{*}$  such that
$$\mathcal{E}(\pi,\tilde x,t,u^{*})\leq \mathcal{E}(\pi,\tilde x,t,u)$$
for every $u$ admissible control (further details on the notion of admissible control will be provided).

This minimization problem will then be complemented by the actual state dynamics, which we write as
\begin{align}
\dot{\tilde{x}}(s) &= \tilde{f}(\tilde{x}(s),\tilde u(s))\qquad s\in[0,t]\\
\tilde{x}(0) &= x_{0},
\end{align}

Finally, one has to specify the manner in which $\pi$ changes over time. One could consider different frameworks for such a rule. One could consider, for example:
\begin{enumerate}
  \item[(i)] $\pi$ is updated using local information about the dynamics. This would represent an agent who can observe $f$ with some degree of accuracy near the current state. For example, one could consider a rule like
    \begin{equation}
      \begin{array}{ll}
  \pi(0)(E) = \pi(t)\bigg(\{h :h(y) = \tilde{f}(y) + e^{-\int_0^t \phi(|\tilde{x}(s)-\hat y|)\,ds}\\
   \cdot (g(y)-\tilde{f}(y)), g \in E\}\bigg).\label{eqn:example-dynamics}
 \end{array}
 \end{equation}
Here $\phi$ needs to be a compactly supported function which goes to infinity at zero (e.g. $\chi_{[0,1]}\cdot \frac{1-x}{x}$). Also, $\{\tilde{x}(s):\, s\in[0,t]\}$ is the collection of the previous states visited up to time $t$. $y = (\hat y,\bar y)$ is a vector combining the state and the control.  The function $\tilde u(s)$ are the actions that have been taken up to time $t$.
\item[(ii)] $\pi$ is given by some parametric representation, and one does a statistical estimation of these parameters using past observations of the state dynamics.
\item[(iii)] $\pi$ is given by some Bayesian problem: namely one computes the posterior distribution of state dynamics given some prior and some observations.

\end{enumerate}

Here our point of view will mostly focus on the first case. In particular, we will assume an \emph{absolute local learning hypothesis}, namely that
\begin{displaymath}
  \pi(t) (\{f : f(x,u) = \tilde f(x,u) \text{ for all } x \in B(\tilde x,\e)\;, u\in U\}) = 1.
\end{displaymath}
This assumption can be interpreted as follows: we assume that an agent learning in this framework can observe the state dynamics in some small region near their current position, and adjusts their belief $\pi$ of possible state dynamics accordingly.

Naturally, this type of hypothesis would be satisfied by dynamics of the form \eqref{eqn:example-dynamics}. Such a hypothesis would not necessarily hold for statistical estimation procedures. However, such an assumption does not seem too unrealistic.

This type of model represents an online learning environment where one passively learns about their environment and makes decisions regarding future actions given all of their current information. Our primary goal in this work will be to study well-posedness and optimality conditions for such a model. Future works will consider other aspects of this model, such as asymptotic learning, stability, approximation and algorithmic considerations.

Throughout the remainder of the paper, we use the following standing assumptions:
\begin{itemize}
\item[(H1)] Given a set of functions $X$ and $U \subset \mathbb{R}^{m}$, there exist constants $L>0$ and $C>0$ such that 
$$|f(x,u)-f(x',u')|\leq L\Big(|x-x'|+|u-u'|\Big)$$
and
$$|f(x,u)|\leq C$$
for every $(x,u),(x',u')\in \mathbb{R}^{n}\times U$, for every $f\in X$.
\item [(H2)] The mapping $(x,u)\mapsto J(x,u)$ is continuous and there exists a constant $L_{J}>0$ such that
$$|J(x,u)-J(x',u)|\leq L_{J}|x-x'|$$
for every $x,x'\in \mathbb{R}^{n}$ and $u\in U$.
\item[(H3)] The mapping $(x,u)\mapsto \tilde{f}(x,u)$ is an element of $X$ satisfying hypothesis (H1).
\end{itemize}

\subsection{Mathematical Preliminaries}

 For a fixed $x\in\mathbb{R}^{n}$ and $r>0$ we denote as $B(x,r)$ the ball in $\mathbb{R}^{n}$ centered at $x$ and with radius $r>0$.
Throughout the paper, we denote as $U\subset \mathbb{R}^{m}$ a compact subset and as $\mathcal{U}$ the set of the measurable function from $[0,\infty)$ taking values in $U$. Also, denote $\mathrm{r.p.m.}(U)$ as the set of Radon probability measure on $U$. We will refer to $\mathcal{U}$ as the set of original control functions. In general, it is well known \cite{WargaBook} that the set $\mathcal{U}$ does not have good compactness properties. For this reason, in this paper we will deal with relaxed controls, which will be defined by the set $\mathcal{R}$ of Borel measurable mappings from $[0,\infty)$ to $\mathrm{r.p.m.}(U)$. If $\sigma \in \mathcal{R}$, the related relaxed dynamics is 
\[
\dot{x}(t)=\int_{U}f(x(t),u)\sigma(t)(du)=:f(x(t),\sigma(t))\quad \mathrm{a.e.}\; t\in[0,\infty).
\]
In general, we can identify an element $u(\cdot)\in \mathcal{U}$ with the element $\delta_{u(\cdot)}\in \mathcal{R}$.  Given a sequence of elements $\{ \sigma_{j}\}\subset \mathcal{R}$, we say that $\sigma_{j}\rightarrow \sigma$ in the topology of $\mathcal{R}$ if 
$$\int_{0}^{\infty}\int_{U}\phi(t,r) \sigma_{j}(t)(dr)dt\rightarrow \int_{0}^{\infty}\int_{U}\phi(t,r) \sigma(t)(dr)dt$$
for every $  \phi\in L^{1}([0,\infty),C^{0}(U)).$ For a more detailed exposition on relaxed controls we refer to \cite{WargaBook}.

Denote by $C^{0}(\mathbb{R}^{n}\times U; \mathbb{R}^{n})$ the set of continuous function over $\mathbb{R}^{n}\times U$ and taking values in $\mathbb{R}^{n}$. Take $X\subset C^{0}(\mathbb{R}^{n}\times U; \mathbb{R}^{n})$ a set of equi-bounded and equi-Lipschitz functions (equivalently, $X$ is such that (H1) is satisfied). Then it follows from the Ascoli-Arzel\`a theorem that $X$ is compact. It then makes sense to define the set $\mathrm{r.p.m.}(X)$, that is the set of Radon probability measure of the compact set of functions $X$. In this paper we will consider mapping $\pi:[0,\infty)\rightarrow \mathrm{r.p.m.}(X)$ that will model the learning process of the system.

\section{Properties of the proposed model}
Define the value function
\begin{equation}\label{eqn:value-function}
W(t,s,x)=\inf_{u\in\mathcal{U}}\int_{s}^{\infty}\int_{X}e^{-\lambda\tau}J(x^{f}(\tau),u(\tau))\pi(t)(df)d\tau.
\end{equation}
We begin by demonstrating, as in the classical control case, that one can remove the dependence on $s$ from the value function.

\begin{proposition}\label{prop:s-dependence}
  The value function takes the form
  \begin{displaymath}
    W(t,s,x) = e^{-\lambda s} W(t,0,x). 
  \end{displaymath}
\end{proposition}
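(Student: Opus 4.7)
The plan is to reduce $W(t,s,x)$ to $W(t,0,x)$ by a time translation, exploiting two structural features of the problem: the dynamics $\dot x^f = f(x^f,u)$ are autonomous (the right-hand side has no explicit $\tau$-dependence), and the running cost has the multiplicatively separable discount factor $e^{-\lambda \tau}$. The parameter $t$ only enters through $\pi(t)$, which is frozen during the minimization defining $W(t,\cdot,\cdot)$, so it plays no role in the argument.

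Concretely, I would fix $s\ge 0$, $x\in\R^n$, and an admissible control $u\in\mathcal{U}$ in the problem defining $W(t,s,x)$. Let $x^f$ denote the corresponding trajectory starting at $x^f(s)=x$ for each $f\in X$. Introduce the shifted control $\bar u(\tau):=u(\tau+s)$ and the shifted trajectory $\bar x^f(\tau):=x^f(\tau+s)$. Because $U$ is fixed and the shift is a measurable bijection of $[0,\infty)$ onto itself, $\bar u\in\mathcal{U}$; moreover, since $f$ does not depend explicitly on time, $\bar x^f$ satisfies $\dot{\bar x}^f(\tau)=f(\bar x^f(\tau),\bar u(\tau))$ with $\bar x^f(0)=x$. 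Performing the substitution $\tau\mapsto \tau+s$ in the cost integral and pulling out the factor $e^{-\lambda s}$ yields
\begin{equation}
\int_s^\infty\!\!\int_X e^{-\lambda\tau}J(x^f(\tau),u(\tau))\,\pi(t)(df)\,d\tau
=e^{-\lambda s}\int_0^\infty\!\!\int_X e^{-\lambda\tau}J(\bar x^f(\tau),\bar u(\tau))\,\pi(t)(df)\,d\tau,
\end{equation}
where Fubini (justified by (H2), the equi-boundedness of trajectories from (H1), and integrability of $e^{-\lambda\tau}$) lets us interchange the order of integration if needed.

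Finally, I would observe that the map $u\mapsto\bar u$ is a bijection from $\mathcal{U}$ onto $\mathcal{U}$ (its inverse is the shift in the opposite direction, extended arbitrarily on $[0,s)$ by any measurable $U$-valued function). Taking the infimum over $u\in\mathcal{U}$ on the left and equivalently over $\bar u\in\mathcal{U}$ on the right gives $W(t,s,x)=e^{-\lambda s}W(t,0,x)$, as desired.

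I do not anticipate a genuine obstacle: this is the standard autonomous/discounted reduction. The only subtle point is to phrase the bijection between controls carefully — the shift $u\mapsto u(\cdot+s)$ loses the data on $[0,s)$, so one must note that those values are irrelevant to the cost starting at $s$, and that any measurable extension recovers a preimage. If one wants to be fully rigorous in the relaxed setting of $\mathcal{R}$ introduced in the preliminaries, the same argument goes through verbatim with $\sigma\in\mathcal{R}$ in place of $u$, since shifting a Borel map $[0,\infty)\to\mathrm{r.p.m.}(U)$ remains Borel.
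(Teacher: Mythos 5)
Your proof is correct and follows essentially the same route as the paper's: a change of variables $\tau=\xi+s$ in the cost integral, pulling out the factor $e^{-\lambda s}$, and using the autonomy of the dynamics to identify the shifted trajectory as the solution started at time $0$. You are in fact somewhat more careful than the paper about why the infimum is preserved (the shift being a surjection of $\mathcal{U}$ onto itself up to irrelevant data on $[0,s)$), which the paper leaves implicit.
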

\begin{proof} In the integral which defines the value function $W(t,s,x)$,
 apply the change of variables $\tau=\xi+s$. This immediately implies
 \begin{equation}
   \begin{array}{ll}
     \int_{s}^{\infty}\int_{X}e^{-\lambda\tau}J(x^{f}(\tau),u(\tau))\pi(t)(df)d\tau\\[5pt]
     =e^{-\lambda s}\int_{0}^{\infty}\int_{X}e^{-\lambda\xi}J(x^{f}(\xi+s),u(\xi+s))\pi(t)(df)d\xi.
   \end{array}
 \end{equation}
Here, $x^{f}(s+\cdot)$ is the solution of the initial value problem $\dot{x}(s+\xi)=f(x(s+\xi), u(s+\xi))$, $x(s)=x$ for every $f\in X$. Rescaling the time variable, it is a straightforward matter to check that the previous initial value  is equivalent to $\dot{x}(\xi)=f(x(\xi), u(\xi))$, $x(0)=x$ for every $f\in X$. This completes the proof.
\end{proof}

It follows from the previous proposition that to obtain a complete characterization of $W(\cdot,\cdot,\cdot)$, it is enough to study the function
$$V(t,x):=W(t,0,x).$$

 Next we establish the existence of minimizers for the fixed-time problem. To do this, we first state two crucial propositions, which establish the continuity of the integral cost in the function describing the dynamics, and then the continuity (in the weak-* topology) of the integral functional with respect to the control.
 \begin{proposition} \label{prop:cont-map}Let us assume hypotheses (H1)-(H2) and that $\lambda>L$, where $L$ is the constant appearing in (H1). Then 
 \begin{itemize}
\item[i)] the mapping $(f,u)\mapsto J(x^{f}(\tilde\sigma)(t),u)$ is continuous on $X\times U$ for every $t\geq0$, $\tilde\sigma\in\mathcal{R}$;
\item[ii)] the mapping $f\mapsto J(x^{f}(\tilde\sigma)(t),\sigma)$ is continuous on $X$ for every $t\geq0$, $\tilde\sigma, \sigma\in\mathcal{R}$;
 \end{itemize}
 \end{proposition}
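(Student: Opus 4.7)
The plan is to reduce both statements to a single continuity result for the trajectory map $f\mapsto x^{f}(\tilde\sigma)(t)$, and then invoke the continuity and Lipschitz properties of $J$ from (H2). The hypothesis $\lambda>L$ is not actually used at this finite-time stage; it will matter later when $t$ ranges over $[0,\infty)$.

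First I would establish the key lemma: for fixed $\tilde\sigma\in\mathcal{R}$ and $t\geq 0$, the map $f\mapsto x^{f}(\tilde\sigma)(t)$ is continuous from $X$ into $\mathbb{R}^{n}$. Let $f_{n}\to f$ in $X$. Since $X$ is topologized as a subset of $C^{0}(\mathbb{R}^{n}\times U;\mathbb{R}^{n})$ with the topology of uniform convergence on compacta (this is the topology in which Ascoli-Arzel\`a yields compactness of $X$), and since (H1) gives the a priori bound $|x^{f_{n}}(\tilde\sigma)(s)|\leq |x_{0}|+Cs$, all trajectories lie in a fixed compact set $K=\overline{B}(0,|x_{0}|+Ct)\times U$. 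Therefore $\epsilon_{n}:=\|f_{n}-f\|_{L^{\infty}(K)}\to 0$. Writing
\[
x^{f_{n}}(t)-x^{f}(t)=\int_{0}^{t}\!\int_{U}\!\bigl[f_{n}(x^{f_{n}}(s),u)-f(x^{f}(s),u)\bigr]\tilde\sigma(s)(du)\,ds,
\]
splitting via the intermediary $f_{n}(x^{f}(s),u)$, and using the equi-Lipschitz bound in (H1), one obtains $|x^{f_{n}}(s)-x^{f}(s)|\leq \epsilon_{n}t+L\int_{0}^{s}|x^{f_{n}}(r)-x^{f}(r)|\,dr$. A standard Gr\"onwall step then yields $|x^{f_{n}}(t)-x^{f}(t)|\leq \epsilon_{n}t\,e^{Lt}\to 0$.

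For part (i): given $(f_{n},u_{n})\to(f,u)$ in $X\times U$, the lemma gives $x^{f_{n}}(\tilde\sigma)(t)\to x^{f}(\tilde\sigma)(t)$, so joint continuity of $J$ from (H2) yields $J(x^{f_{n}}(\tilde\sigma)(t),u_{n})\to J(x^{f}(\tilde\sigma)(t),u)$. For part (ii), interpret $J(x,\sigma)$ as the natural relaxation $\int_{U}J(x,u)\sigma(t)(du)$; by (H2) and the fact that $\sigma(t)$ is a probability measure, this expression is $L_{J}$-Lipschitz in $x$, uniformly in $\sigma$ and $t$. Combining this Lipschitz estimate with the trajectory continuity from the lemma gives continuity of $f\mapsto J(x^{f}(\tilde\sigma)(t),\sigma)$.

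There is no real obstacle here: the result is essentially a repackaging of classical continuous dependence of ODE solutions on the vector field, adapted to the relaxed control setting. The one point that requires care is ensuring the abstract convergence $f_{n}\to f$ in $X$ produces uniform control of $f_{n}-f$ on the set of pairs $(x,u)$ actually visited by the trajectories, which is where the equi-boundedness and equi-Lipschitz content of (H1) (and hence the $C^{0}$-compactness of $X$) is essential.
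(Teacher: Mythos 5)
Your proof is correct and follows essentially the same route as the paper: a Gr\"onwall estimate giving $|x^{f_1}(t)-x^{f_2}(t)|\leq \delta\, t e^{Lt}$ for nearby vector fields, followed by the Lipschitz continuity of $J$ in $x$ (and its joint continuity for part (i)). Your two side observations --- that convergence in $X$ must be pinned down as uniform convergence on the compact set actually visited by the trajectories, and that $\lambda>L$ is not needed for continuity at a fixed $t$ but only for the $t$-uniform estimates used later --- are both accurate and, if anything, slightly more careful than the paper's write-up.
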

\begin{proof}
 Fix $\tilde\sigma \in \mathcal{R}$ and take  $f_{1}(\cdot,\cdot), f_{2}(\cdot,\cdot)\in X$ such that $||f_{1}(\cdot,u)-f_{2}(\cdot,u)||_{\infty}<\delta$ for every $u\in U$. Consider
 $x_{1}(\cdot)$ and $x_{2}(\cdot)$  solutions of
\begin{align}
\dot{x}(s) &= f_{i}(x(s), \tilde{\sigma}(s))\qquad s\in[0,\infty)\\
x(0) &= x_{0},
\end{align}
for $i=1,2$.
Using the uniform Lipschitz continuity of  the functions in $X$, we easily obtain
\[
\begin{array}{llll}
|x_{1}(t)-x_{2}(t)|\leq \int_{0}^{t}|f_{1}(x_{1}(s),u_{1})-f_{2}(x_{2}(s),u_{2})| ds \\[5pt]
\leq \int_{0}^{t}|f_{1}(x_{1}(s),\tilde{\sigma}(s))-f_{2}(x_{1}(s),\tilde{\sigma}(s))| ds \\[5pt]
+\int_{0}^{t}|f_{2}(x_{1}(s),\tilde{\sigma}(s))-f_{2}(x_{2}(s),\tilde{\sigma}(s))| ds \\[5pt]
\leq \delta t+ L \int_{0}^{t}|x_{1}(s)-x_{2}(s)|ds.
\end{array}
\]
It follows from Gr\"onwall's Lemma that
\begin{equation}\label{gronwall}
|x_{1}(t)-x_{2}(t)|\leq \delta\, t e^{Lt}.
\end{equation}
 Take $u_{1},u_{2}\in U$ such that $|u_{1}-u_{2}|<\delta$. In view of the hypothesis $(H2)$, we easily obtain the estimates
 $$ e^{-\lambda t}|J(x_{1}(t),u_{1})-J(x_{2}(t),u_{2})|\leq $$
 $$\leq e^{-\lambda t}\Big(|J(x_{1}(t),u_{1})-J(x_{2}(t),u_{1})|+|J(x_{2}(t),u_{1})-J(x_{2}(t),u_{2})|\Big)$$
 $$\leq \delta\, L_{J} t e^{-(\lambda-L) t}+e^{-\lambda t}\omega_{J}(\delta)$$
 for every $t\geq0$, where $\omega_{J}$ is the modulus of continuity of $J(\cdot,\cdot)$. This proves the  statement $i)$.
 
 The proof of $ii)$ follows the same initial steps, obtaining \eqref{gronwall} for  $x_{1}(\cdot),x_{2}(\cdot)$. Fix $\sigma \in \mathcal{R}$. An easy application of \eqref{gronwall} and of hypothesis $(H2)$ leads to
 $$ e^{-\lambda t}|J(x_{1}(\tilde{\sigma})(t),\sigma)-J(x_{2}(\tilde{\sigma})(t), \sigma)|\leq \delta\, L_{J} t e^{-(\lambda-L) t}$$
 for every $t\geq0$ and $\sigma, \tilde{\sigma}\in\mathcal{R}$. This concludes the proof.
 \end{proof}

\begin{proposition}\label{prop:continuity-functional}
  Let us assume hypotheses (H1)-(H2) and that $\lambda>L$, where $L$ is the constant appearing in (H1). Then the functional $G: \mathcal{R} \to \R$, given by
  \begin{displaymath}
   G(\sigma) = \int_s^\infty e^{-\lambda \tau} J(x^f(\tau),\sigma(\tau)) \pi(df) \,d\tau
 \end{displaymath}
 is continuous (in the topology of $\mathcal{R}$).
\end{proposition}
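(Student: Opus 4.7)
The plan is to reduce continuity in $\sigma$ to pointwise-in-$f$ continuity, and then commute with integration against $\pi$ by dominated convergence. First, I would apply Fubini to rewrite
\begin{displaymath}
G(\sigma) = \int_X F(f,\sigma)\,\pi(df), \qquad F(f,\sigma):=\int_s^\infty e^{-\lambda \tau} J(x^{f,\sigma}(\tau),\sigma(\tau))\,d\tau,
\end{displaymath}
where $J(x,\sigma(\tau))$ is understood as $\int_U J(x,u)\,\sigma(\tau)(du)$. Using $|f|\le C$ from (H1) and Lipschitz continuity of $J$ in $x$ from (H2), I would derive the uniform bound $|e^{-\lambda\tau}J(x^{f,\sigma}(\tau),u)|\le e^{-\lambda\tau}(M+L_J C\tau)$, which is integrable in $\tau$ and independent of $f$ and $\sigma$. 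This takes care of the eventual dominated convergence in $f$ as well as of tail bounds in $\tau$.

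Next, fixing an arbitrary $f\in X$ and a sequence $\sigma_j\to \sigma$ in $\mathcal{R}$, I would show that $x^{f,\sigma_j}\to x^{f,\sigma}$ uniformly on every compact time interval $[s,T]$. Since $|\dot x^{f,\sigma_j}|\le C$, the trajectories are equi-bounded and equi-Lipschitz, so by Ascoli--Arzel\`a a subsequence converges uniformly on compacts to some $y$. To identify $y=x^{f,\sigma}$, I would pass to the limit in the integral form of the ODE by splitting
\begin{displaymath}
\int_U f(x^{f,\sigma_j}(r),u)\,\sigma_j(r)(du) = \int_U [f(x^{f,\sigma_j}(r),u)-f(y(r),u)]\,\sigma_j(r)(du) + \int_U f(y(r),u)\,\sigma_j(r)(du).
\end{displaymath}
The first piece vanishes by Lipschitz continuity of $f$ and uniform convergence, while the second converges to $\int_U f(y(r),u)\,\sigma(r)(du)$ by applying the definition of relaxed convergence to the test function $\phi(r,u)=\chi_{[s,T]}(r)f(y(r),u)$, which lies in $L^1([0,\infty),C^0(U))$ because $U$ is compact and $|f|\le C$.

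Given trajectory convergence, I would prove $F(f,\sigma_j)\to F(f,\sigma)$ by truncating at $T$ and splitting
\begin{displaymath}
|J(x^{f,\sigma_j}(\tau),\sigma_j(\tau)) - J(x^{f,\sigma}(\tau),\sigma(\tau))| \le L_J |x^{f,\sigma_j}(\tau)-x^{f,\sigma}(\tau)| + \left|\int_U J(x^{f,\sigma}(\tau),u)[\sigma_j(\tau)-\sigma(\tau)](du)\right|.
\end{displaymath}
On $[s,T]$ the first term tends to zero uniformly by the Lipschitz property of $J$ and uniform convergence of trajectories; the tail $[T,\infty)$ contribution is controlled by the uniform bound from step one. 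The second term, integrated against $e^{-\lambda\tau}\,d\tau$ on $[s,T]$, is precisely the evaluation of $\sigma_j-\sigma$ on the test function $\phi(\tau,u)=\chi_{[s,T]}(\tau)e^{-\lambda\tau}J(x^{f,\sigma}(\tau),u)$, which lies in $L^1([0,\infty),C^0(U))$ because $J$ is continuous on $\mathbb{R}^n\times U$ with $U$ compact and $x^{f,\sigma}$ bounded on $[s,T]$. Hence the second term vanishes by the definition of relaxed convergence.

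Finally, I would apply dominated convergence against $\pi(df)$, using the $f$-uniform bound from step one, to conclude $G(\sigma_j)\to G(\sigma)$. The main obstacle is technical rather than conceptual: the definition of convergence in $\mathcal{R}$ requires $L^1$-in-time test functions, so the non-compact horizon $[s,\infty)$ must be handled by truncation, and the tail estimate relies crucially on the standing assumption $\lambda>L$ (in fact here $\lambda>0$ suffices because $|x^{f,\sigma}(\tau)|$ grows only linearly) to dominate the linear growth of $|J|$ by the exponential discount.
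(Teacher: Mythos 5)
Your proof is correct, but it takes a genuinely different route through the two technical steps where the argument could go wrong. The paper's proof controls the trajectory perturbation globally in time via Gr\"onwall, obtaining $|x_n^f(t)-x^f(t)|\le C(n)e^{Lt}$ with $C(n)\to0$, and then needs the standing assumption $\lambda>L$ so that $e^{-\lambda s}e^{Ls}$ still decays; it handles the weak-convergence term by a Fubini swap over $X\times U$, reducing everything to the single test function $(s,r)\mapsto e^{-\lambda s}\int_X J(x^f(s),r)\,\pi(df)$. You instead fix $f$, prove uniform-on-compacts convergence of trajectories by Ascoli--Arzel\`a plus identification of the limit ODE, truncate the time horizon at $T$, absorb the tail with the $f$- and $j$-uniform integrable bound $e^{-\lambda\tau}(M+L_JC\tau)$, and only then integrate in $f$ by dominated convergence. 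Your version buys two things: it sidesteps the delicate point in the Gr\"onwall route (the constant $C(n)$ must be shown uniform over the \emph{unbounded} time axis, where the definition of convergence in $\mathcal{R}$ only pairs against $L^1$-in-time test functions --- precisely the issue your truncation is designed to handle), and, as you correctly observe, it establishes the proposition under $\lambda>0$ alone, since the boundedness $|f|\le C$ gives only linear growth of trajectories. The paper's version is shorter and packages the weak-convergence step into one test function rather than one per $f$, but both reductions are legitimate. One point worth making explicit in a write-up: after extracting a uniformly convergent subsequence via Ascoli--Arzel\`a, you should note that the limit is independent of the subsequence (by uniqueness for the Lipschitz ODE), so the full sequence of trajectories converges.
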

\begin{proof}
  Suppose that $\sigma_n$ converges to $\sigma$ (in the sense of generalized controls). Let $x_n^f(t)$ be the solution of
  \begin{displaymath}
    \begin{array}{ll}
    x_n^f(t) = x_0 + \int_0^t f(x_n^f(s),\sigma_n(s)) \,ds \\
    = x_{0}+\int_0^t \int_U f(x_n(s),r) \sigma_{n}(s) (dr) \,ds,
  \end{array}
  \end{displaymath}
  and
  \begin{displaymath}
    x^f(t) = x_0 + \int_0^t \int_U f( x^f(s),r) \sigma(s) (dr) \,ds.
  \end{displaymath}
  Using that $\sigma_n \to \sigma$ in the sense of relaxed controls, that $f$ is Lipschitz in $x$ and that $f( x^f(s),r)$ is continuous in $s$, one can use Gr\"onwall's inequality once again to obtain that 
  \begin{equation}\label{eqn:x-conv}
    |x_n^f(t)-x^f(t)| \leq C(n)e^{Lt},
  \end{equation}
  where $C(n)$ is a positive constant approaching zero as $n \to \infty$.

  We next estimate the difference between (abusing notation), $\mathcal{E}(\sigma_n)$ and $\mathcal{E}(\bar \sigma)$; in other words the main task is to estimate
  \begin{displaymath}
    \lim_{n} \!\int_0^\infty\!\int_X\! e^{-\lambda s} \left( J(x_n^f(s),\sigma_n(s)) - J( x^f(s),\sigma(s) ) \right)\pi(t) (df) ds.
  \end{displaymath}
  By adding and subtracting $J(x^f(s),\sigma_n(s))$, and letting $\epsilon = \lambda - L > 0$, we then need to estimate
  \begin{equation}
  \begin{array}{ll}
  \leq \lim_{n \to \infty} \int_0^\infty e^{-(\epsilon/2)s} \int_X e^{-(\lambda-\epsilon/2) s}\bigg(L_{J} |x_n^f(s) - x^f(s)| \\
  + J(x^f(s),\sigma_n (s)) - J(x^f(s),\sigma(s)) \bigg)\pi(t)(df) ds,
  \label{eqn:est1}
\end{array}
\end{equation}
Using the dominated convergence theorem we can move the limit to the inside of the integral in $s$. Then using Arzela's bounded convergence theorem (namely that the dominated convergence theorem holds for Riemann integrals when the limit is also Riemann integrable), we can pass the limit in $n$ to the intermost integral. The first term goes to zero using \eqref{eqn:x-conv}. For what concern the second term let us write explicitly:
$$\int_0^\infty e^{-\lambda s} \int_X  \int_{U} J( x^f(s),r)(\sigma_{n}(s)-\sigma(s))(dr)\,\pi(df) ds.$$
It follows from Proposition \ref{prop:cont-map}, part $i$, that the mapping $(f,u)\mapsto e^{-(L + \epsilon/2) s} J(\bar x^{f}(s),u)$ is continuous on $X\times U$ for each $s>0$. A simple application of Fubini's Theorem yields
$$\int_0^\infty e^{-\lambda s}   \int_{U} \int_X J( x^f(s),r)\pi(df)\,(\sigma_{n}(s)-\sigma(s))(dr)\, ds,$$
which goes to $0$ for $n\rightarrow \infty$ since $\sigma_{n}\rightarrow \sigma$ in $\mathcal{R}$. 
This concludes the proof.

\end{proof}

The following two corollaries are immediate consequences of the previous proposition:

\begin{corollary}\label{cor:existence}
  Under the assumptions of Proposition \ref{prop:continuity-functional}, there exists a minimizer of the variational problem
  \begin{displaymath}
    \min_{\sigma \in \mathcal{R}} \int_0^\infty \int_X e^{-\lambda \tau} J(x^f(\tau),\sigma(\tau)) \pi(t)(df) \,d\tau.
  \end{displaymath}
\end{corollary}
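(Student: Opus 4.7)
The plan is to apply the direct method of the calculus of variations, using continuity of the functional (already established in Proposition \ref{prop:continuity-functional}) together with compactness of the space of relaxed controls. First I would take a minimizing sequence $\{\sigma_n\}\subset \mathcal{R}$, that is, a sequence for which $G(\sigma_n)\to \inf_{\sigma\in\mathcal{R}} G(\sigma)$, where $G$ is the functional defined in Proposition \ref{prop:continuity-functional}. Note that the infimum is finite since $J$ is bounded on the relevant compact set (by (H1)-(H2), trajectories live in a bounded set on any finite time interval, and the discount factor $e^{-\lambda\tau}$ ensures integrability in $\tau$, though one could also replace $\lambda>L$ with a boundedness argument based on (H2) applied along bounded trajectories).

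Next I would extract a convergent subsequence. The classical theory of relaxed (generalized) controls (see Warga's book, already cited) guarantees that $\mathcal{R}$ is sequentially compact in the topology described in the Mathematical Preliminaries: since $U$ is compact, $\mathrm{r.p.m.}(U)$ is weak-$*$ compact by Banach--Alaoglu and metrizable, and the topology on $\mathcal{R}$ coincides with a weak-$*$ topology on bounded Radon measures on $[0,\infty)\times U$ with prescribed time marginal, which is compact. Therefore, up to a subsequence (not relabeled), $\sigma_n \to \sigma^{*}$ in the topology of $\mathcal{R}$ for some $\sigma^{*}\in\mathcal{R}$.

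Finally, continuity of $G$ from Proposition \ref{prop:continuity-functional} gives
\begin{equation}
G(\sigma^{*}) = \lim_{n\to\infty} G(\sigma_n) = \inf_{\sigma\in\mathcal{R}} G(\sigma),
\end{equation}
so $\sigma^{*}$ is the desired minimizer.

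The only conceptual obstacle is the compactness of $\mathcal{R}$ under the given topology, since the time horizon is infinite. This is handled by observing that the defining test functions $\phi\in L^{1}([0,\infty),C^{0}(U))$ are integrable in time, so that the discount/integrability structure collapses the infinite horizon issue; alternatively one can truncate to $[0,T]$, use standard compactness of relaxed controls on compact time intervals, and use a diagonal argument together with a uniform tail estimate from (H2) and the discount factor $e^{-\lambda\tau}$ to pass to the limit. Once compactness is in hand, the remaining argument is routine thanks to Proposition \ref{prop:continuity-functional}.
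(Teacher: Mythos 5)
Your proof is correct and follows essentially the same route as the paper's: finiteness of the infimum, a minimizing sequence, weak-$*$ sequential compactness of $\mathcal{R}$ to extract a convergent subsequence, and then Proposition \ref{prop:continuity-functional} to pass to the limit. You simply supply more detail than the paper does on why $\mathcal{R}$ is compact over the infinite horizon (the paper just invokes compactness of generalized controls), which is a reasonable point to elaborate.
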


\begin{proof}\label{cor:same_val}
  First, since $J$ is continuous, and the integral cost has a decaying exponential weight, it is clear that the infimum is finite, and so we can select a minimizing sequence $\sigma_n \in \mathcal{R}$. By taking a subsequence, we will have that $\sigma_n$ converges (in the sense of generalized controls, that is weakly star) to $\bar\sigma$. Proposition \ref{prop:continuity-functional} then establishes the desired result.
\end{proof}

\begin{corollary}
  The value functions associated with standard and generalized controls are the same, meaning that
  \begin{displaymath}
   V(t,x) = \min_{\sigma \in \mathcal{R}} \int_0^\infty \int_X e^{-\lambda \tau} J(x^f(\tau),\sigma(\tau)) \pi(t)(df) \,d\tau.
  \end{displaymath}
\end{corollary}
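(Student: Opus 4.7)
The plan is to sandwich the two infima: one direction is trivial, and the other follows from density of ordinary controls in the relaxed controls together with the continuity proved in Proposition \ref{prop:continuity-functional}.

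First, I would observe that the embedding $u(\cdot)\mapsto \delta_{u(\cdot)}$ identifies $\mathcal{U}$ with a subset of $\mathcal{R}$, and that under this identification the relaxed dynamics coincides with the original dynamics (since $\int_U f(x,r)\delta_{u(t)}(dr)=f(x,u(t))$) and the running cost agrees as well. Consequently, for every $u\in\mathcal{U}$, the functional $G(\delta_u)$ equals the integrand defining $V(t,x)$. This immediately yields
\begin{equation}
\min_{\sigma\in\mathcal{R}} G(\sigma) \;\leq\; \inf_{u\in\mathcal{U}} G(\delta_u) \;=\; V(t,x).
\end{equation}

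For the reverse inequality, I would invoke the classical density result (see \cite{WargaBook}) that $\{\delta_u:u\in\mathcal{U}\}$ is dense in $\mathcal{R}$ in the topology defined earlier in the preliminaries: given any $\sigma\in\mathcal{R}$, there exists a sequence $u_n\in\mathcal{U}$ such that $\delta_{u_n}\to\sigma$ in $\mathcal{R}$. Let $\bar\sigma$ be a minimizer of $G$ on $\mathcal{R}$, whose existence was established in Corollary \ref{cor:existence}, and pick $u_n\in\mathcal{U}$ with $\delta_{u_n}\to\bar\sigma$. By Proposition \ref{prop:continuity-functional}, $G$ is continuous on $\mathcal{R}$, so
\begin{equation}
V(t,x)\;\leq\; \liminf_{n\to\infty} G(\delta_{u_n}) \;=\; G(\bar\sigma) \;=\; \min_{\sigma\in\mathcal{R}} G(\sigma).
\end{equation}
Combining the two inequalities yields the desired equality.

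The only non-routine step is the density of Dirac-valued relaxed controls in $\mathcal{R}$, but this is a standard fact already referenced in the paper and requires no new argument here. Everything else is packaging: the compatibility of the relaxed and standard dynamics under the Dirac embedding, and an application of the continuity result established just above. I do not anticipate any substantive obstacle; the statement is essentially a relaxation theorem, and all the analytical ingredients (compactness giving a minimizer, continuity of the cost, and density) are already in place.
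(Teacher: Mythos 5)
Your proof is correct and follows essentially the same route as the paper: the easy inequality comes from the Dirac embedding of $\mathcal{U}$ into $\mathcal{R}$, and the reverse inequality combines the density of Dirac-valued controls in $\mathcal{R}$ (citing \cite{WargaBook}) with the continuity of the cost functional from Proposition \ref{prop:continuity-functional} applied to a minimizer furnished by Corollary \ref{cor:existence}. The only difference is that you spell out the trivial direction explicitly, which the paper leaves implicit.
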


\begin{proof}
  Given a minimizing relaxed control $\bar \sigma \in \mathcal{R}$, we can approximate it (in the topology of $\mathcal{R}$) using a sequence in $\mathcal{U}$ (see \cite{WargaBook}). The result then follows using again Proposition \ref{prop:continuity-functional}. 
\end{proof}

We now give some simple regularity properties of the value function. In what follows we will assume that the mapping $t\mapsto \pi(t)$ is either lower semicontinuous w.r.t. the weak-* topology, namely
\begin{equation}\label{eqn:pi-low_cont}
\int_{X}c(f)\pi(t)(df)\leq \liminf_{s\rightarrow t}\int_{X}c(f)\pi(s)(df) 
\end{equation}
for every $ c\in C^{0}(X)$,
or Lipschitz continuous w.r.t. the weak-* topology, namely that there exists $L_{\pi}>0$ such that
\begin{equation}\label{eqn:pi-Lip}
\Big| \int_{X}c(f)\pi(t)(df)-\int_{X}c(f)\pi(s)(df)\Big|\leq L_{\pi}|t-s|
\end{equation}
for every $ c\in C^{0}(X)$.
Then the next result follows:

\begin{proposition}\label{prop:Lipschitz}
  Let us assume hypotheses (H1)-(H2) and that $\lambda>L$, where $L$ is the constant appearing in (H1).  Then the mapping $x\mapsto V(t,x)$ is Lipschitz continuous for every $t\geq 0$.
  Furthermore:
  \begin{itemize}
 \item[$i)$] if $\pi$ is lower semicontinuous in $t$ (namely \eqref{eqn:pi-low_cont}), then the mapping $t\mapsto V(t,x)$ is lower semicontinuous for every $x\in \mathbb{R}^{n}$.
  \item[$ii)$] if $\pi$ is Lipschitz continuous in $t$ (namely \eqref{eqn:pi-Lip}), then the mapping $t\mapsto V(t,x)$ is Lipschitz continuous for every $x\in \mathbb{R}^{n}$.
   \end{itemize}
\end{proposition}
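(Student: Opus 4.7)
The plan is to rewrite $V(t,x)=\min_\sigma \int_X c_\sigma(f)\,\pi(t)(df)$ with
$$c_\sigma(f):=\int_0^\infty e^{-\lambda\tau}J(x^f(\tau;\sigma),\sigma(\tau))\,d\tau,$$
where $x^f(\cdot;\sigma)$ is the trajectory from the given initial point under dynamics $f$ and control $\sigma$; the outer minimum is attained by Corollary \ref{cor:existence}. Each item will then be obtained by comparing values at two points using a minimizer from one as a test control at the other.

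For the Lipschitz estimate in $x$, I would fix $t$, pick $x_1,x_2\in\mathbb{R}^n$, and take a minimizer $\sigma^*$ for $V(t,x_1)$. Running $\sigma^*$ with the same $f\in X$ but from initial data $x_1$ and $x_2$, a Gr\"onwall estimate on the initial condition bounds the trajectory difference by $|x_1-x_2|e^{L\tau}$ uniformly in $f$. Combining with (H2), using $\lambda>L$, and testing with $\sigma^*$ at $x_2$ yields $V(t,x_2)-V(t,x_1)\leq \frac{L_J}{\lambda-L}|x_1-x_2|$, and the reverse bound follows by symmetry.

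For items (i) and (ii), the key preliminary estimate is
$$|c_\sigma(f_1)-c_\sigma(f_2)|\leq \frac{L_J}{(\lambda-L)^2}\|f_1-f_2\|_\infty,$$
which follows from \eqref{gronwall} with $\sigma$ held fixed and from (H2); combined with the trivial uniform bound on $c_\sigma$ coming from $|f|\leq C$, it makes $\{c_\sigma\}_{\sigma\in\mathcal{R}}$ equi-Lipschitz and uniformly bounded on the compact space $X$. For (i), given $s_n\to t$, I would take minimizers $\sigma_n$ for $V(s_n,x)$, extract a subsequence with $\sigma_n\to\bar\sigma$ in $\mathcal{R}$ and $V(s_n,x)\to\liminf_{s\to t}V(s,x)$, and apply Proposition \ref{prop:continuity-functional} with $\pi=\delta_f$ to obtain $c_{\sigma_n}(f)\to c_{\bar\sigma}(f)$ pointwise in $f$. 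Equicontinuity plus Arzel\`a-Ascoli then upgrade this to uniform convergence on $X$, so $\int c_{\sigma_n}\,d\pi(s_n)-\int c_{\bar\sigma}\,d\pi(s_n)=o(1)$ because each $\pi(s_n)$ is a probability measure; finally, \eqref{eqn:pi-low_cont} applied to the continuous function $c_{\bar\sigma}$ yields $\liminf_n\int c_{\bar\sigma}\,d\pi(s_n)\geq \int c_{\bar\sigma}\,d\pi(t)\geq V(t,x)$. For (ii), using a minimizer $\sigma_s$ for $V(s,x)$ as a test control at $t$ gives $V(t,x)-V(s,x)\leq \int c_{\sigma_s}\,d\pi(t)-\int c_{\sigma_s}\,d\pi(s)\leq L_\pi|t-s|$ via \eqref{eqn:pi-Lip} applied to the continuous function $c_{\sigma_s}$, and symmetry completes the bound.

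The main obstacle I anticipate is item (i): one must simultaneously pass to the limit in both the minimizers $\sigma_n$ and the measures $\pi(s_n)$, with only one-sided (lower semicontinuous) control on the latter. The equi-Lipschitz bound on $\{c_\sigma\}$ is precisely the ingredient that converts the separate continuity statements in Propositions \ref{prop:cont-map} and \ref{prop:continuity-functional} into the joint limit required here; without it, the standard argument of testing \eqref{eqn:pi-low_cont} against a fixed continuous cost does not directly produce a lower bound for $V(t,x)$.
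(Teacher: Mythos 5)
Your proof is correct. For the Lipschitz bound in $x$ and for part (ii) you follow essentially the paper's route: test the value at one point with a minimizer from the other, use Gr\"onwall plus (H2) plus $\lambda>L$ (resp.\ \eqref{eqn:pi-Lip} applied to the fixed continuous cost $c_{\sigma_s}$), and symmetrize; the paper writes these as a single two-sided estimate with one minimizer, which is slightly less careful than your one-sided-plus-symmetry version but is the same argument. Part (i) is where you genuinely diverge. The paper fixes a window $(t-h,t+h)$, bounds $\liminf_{\tau\to t}V(\tau,x)$ below by an infimum over the optimal controls $\sigma^*_r$ for $r$ in that window, and then applies Fatou's lemma in $s$ together with \eqref{eqn:pi-low_cont} to each fixed $\sigma^*_r$; the interchange of $\inf_r$ with $\liminf_\tau$ is the delicate step there. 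You instead extract a weak-$*$ convergent subsequence of minimizers $\sigma_n\to\bar\sigma$ along a sequence realizing the liminf, prove the uniform-in-$\sigma$ Lipschitz estimate $|c_\sigma(f_1)-c_\sigma(f_2)|\le \frac{L_J}{(\lambda-L)^2}\|f_1-f_2\|_\infty$, and use equicontinuity on the compact set $X$ to upgrade the pointwise convergence $c_{\sigma_n}(f)\to c_{\bar\sigma}(f)$ (from Proposition \ref{prop:continuity-functional} with $\pi=\delta_f$) to uniform convergence, so that \eqref{eqn:pi-low_cont} only needs to be tested against the single continuous function $c_{\bar\sigma}$. This costs you the compactness of $\mathcal{R}$ and the Arzel\`a--Ascoli step, but it buys a cleaner and more quantitative argument that handles the simultaneous limit in the control and in the measure without the paper's inf/liminf interchange; it is, if anything, easier to verify than the published proof. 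The only point to flag explicitly is that $c_{\bar\sigma}\in C^0(X)$ (needed to invoke \eqref{eqn:pi-low_cont}), which your equi-Lipschitz estimate already supplies, and that the sup-norm on $X$ you use is consistent with the metric implicit in the paper's Proposition \ref{prop:cont-map}.
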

\begin{proof}
We first prove the Lipschitz continuity regularity w.r.t. $x$. Fix $t>0$. In view of Corollary \ref{cor:existence}, there exists $\sigma^{*}\in \mathcal{R}$ such that
$$V(t,x)=\int_{0}^{\infty}\int_{X}e^{-\lambda s}J(x^{f}(s,x),\sigma^{*}(s))\pi(t)(df)ds.$$
 Then it easily follows that
 \begin{equation}
   \begin{array}{llll}
|V(t,x)-V(t,y)|\leq  \\[5pt]
\int\limits_{0}^{\infty}e^{-\lambda s}\int\limits_{X}\Big|J(x^{f}(s,x),\sigma^{*}(s))-J(x^{f}(s,y),\sigma^{*}(s))\Big|\pi(t)(df)ds \\[5pt]
\leq \int_{0}^{\infty}e^{-\lambda s}\int_{X}L_{J}|x^{f}(s,x)-x^{f}(s,y)|\pi(t)(df)ds\\[5pt]
 \leq L_{J}\int_{0}^{\infty} e^{-(\lambda-L)s}|x-y|ds=C|x-y|,
\end{array}
\end{equation}
where we have used, respectively, the Lipschitz continuity of $J(\cdot,u)$, Gr\"onwall's lemma and the hypothesis $\lambda>L$.

Now we concentrate on the regularity w.r.t. $t$. Let us assume assumption \eqref{eqn:pi-low_cont} and let us fix $\epsilon=\lambda-L$. Then, in view of Proposition \ref{prop:cont-map}, $ii)$, the mapping \\$f\mapsto  e^{-(\lambda -\epsilon/2)}J(x^{f}(s),\sigma(s))$ is continuous for each $\sigma\in \mathcal{R}$. Fix $h>0$ and call $\sigma^{*}_{\tau}(\cdot)\in \mathcal{R}$ the optimal control such that
$$V(\tau,x)=\int_{0}^{\infty}\int_{X}e^{-\lambda s}J(x^{f}(s,x),\sigma_{\tau}^{*}(s))\pi(\tau)(df)ds.$$ 
It now follows, using the Fatou's Lemma and \eqref{eqn:pi-low_cont}, that
$$\liminf_{\tau\rightarrow t} V(\tau,x)=$$
$$\liminf_{\tau\rightarrow t} \int_{0}^{\infty}\int_{X}e^{-\lambda s}J(x^{f}(s,x),\sigma^{*}_{\tau}(s))\pi(\tau)(df)ds\geq$$
$$\geq \inf_{t-h<r<t+h}\liminf_{\tau\rightarrow t} \int_{0}^{\infty}\int_{X}e^{-\lambda s}J(x^{f}(s,x),\sigma^{*}_{r}(s))\pi(\tau)(df)ds$$
$$ \geq \inf_{t-h<r<t+h} \int_{0}^{\infty}\liminf_{\tau\rightarrow t}\int_{X}e^{-\lambda s}J(x^{f}(s,x),\sigma^{*}_{r}(s))\pi(\tau)(df)ds$$
$$ \geq \inf_{t-h<r<t+h}\int_{0}^{\infty}\int_{X}e^{-\lambda s}J(x^{f}(s,x),\sigma^{*}_{r}(s))\pi(t)(df)ds\geq$$
$$\geq V(t,x)$$

In particular, since the previous relations hold true for every $h>0$, we obtain that
\begin{equation}
 V(t,x)\leq \liminf_{\tau\rightarrow t}V(\tau,x) 
\end{equation}
for every $x\in\mathbb{R}^{n}$, which concludes the proof of $i)$.

We now show property $ii)$. Let us now assume hypothesis \eqref{eqn:pi-Lip}). Fix $x$,  and take $t,\tau>0$. Then, in view of an application of Proposition \ref{prop:cont-map}, $ii)$, to the mapping $f\mapsto e^{-(\lambda-\epsilon/2) s} J( x^{f}(s),\sigma^{*}(s))$  and in view of the Lipschitz regularity of $t\mapsto \pi(t)$, we obtain that
\begin{equation}
  \begin{array}{lll}
|V(t,x)-V(\tau,x)|\\[5pt]
\leq  \Big|\int_{0}^{\infty}e^{-\lambda s}\int_{X}J(x^{f}(s),u^{*}(s))(\pi(t)-\pi(\tau))(df)\,ds\Big| \\[5pt]
\leq \int_{0}^{\infty}e^{-(\epsilon/2) s}L_{\pi}|t-\tau|ds=C|t-\tau|.
\end{array}
\end{equation}
This proves relation $ii)$  and concludes the proof.
\end{proof}
%
%
%
%
%
%
%
We recall the \emph{absolute local learning} assumption around a point $\tilde{x}$ given in the introduction; namely that
\begin{equation}\label{eqn:abs-learn}
  \pi(t)(\{f : f(x,u) = \tilde f(x,u) \text{ for all } x \in B(\tilde x,\e)\;, u\in U\}) = 1.
  \end{equation}
We remind the reader that this assumption can be interpreted as follows: we assume that an agent learning in this framework can observe the state dynamics in some small region near their current position, and adjusts their belief $\pi$ of possible state dynamics accordingly.

We now state our dynamic programming principle.

\begin{proposition}\label{prop:DPP} Assume assumptions $(H1)$,$(H3)$.
  Assume also that $\pi$ satisfies the absolute local learning assumption \eqref{eqn:abs-learn} around a point $\tilde{x}$. Then for any $h$ satisfying 
 \begin{equation}
   0 < h < \sup_{f \in supp(\pi)}|f|_\infty \e
 \end{equation}
 we have the following dynamic programming principle locally around $\tilde{x}$:

\begin{equation}
  \begin{aligned}
&V(t,\tilde{x}) = \\
&\inf_{u\in \mathcal{U}} \Big(\int_{0}^{h} J(x^{\tilde{f}}(s),u(s))e^{-\lambda s}ds  +e^{-\lambda h}V(t,\tilde{x}(h,u))\Big) \label{eqn:DPP}
\end{aligned}
\end{equation}
\end{proposition}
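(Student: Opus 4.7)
My plan is to prove the two inequalities of \eqref{eqn:DPP} separately, by splitting the infinite-horizon integral defining $\mathcal{E}(\pi,\tilde x,t,u)$ at time $h$. The restriction on $h$---which one should read as $h\cdot\sup_{f\in\mathrm{supp}(\pi)}|f|_\infty<\e$, so that (H1) gives $|x^f(s)-\tilde x|\le Cs<\e$ uniformly in $f$ and $u$---together with the absolute local learning hypothesis \eqref{eqn:abs-learn} guarantees that, for $\pi(t)$-a.e.\ $f$, the trajectory $x^f$ with $x^f(0)=\tilde x$ stays inside $B(\tilde x,\e)$ on $[0,h]$, and hence by uniqueness of solutions of Lipschitz ODEs satisfies $x^f(s)=x^{\tilde f}(s)$ for every $s\in[0,h]$. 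In particular $x^f(h)=x^{\tilde f}(h)$ $\pi(t)$-a.s., which is the key structural identity driving the splitting.

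For the inequality $V(t,\tilde x)\le\mathrm{RHS}$, I would fix $u_1\in\mathcal U$ and $\eta>0$, pick $u_2\in\mathcal U$ that is $\eta$-optimal for $V(t,x^{\tilde f}(h,u_1))$ (such a control exists since $V$ is defined as an infimum), and test $\mathcal{E}$ on the concatenation $u(s)=u_1(s)$ on $[0,h]$, $u(s)=u_2(s-h)$ on $(h,\infty)$. Splitting the integral at $h$, the head collapses to $\int_0^h J(x^{\tilde f}(s),u_1(s))e^{-\lambda s}\,ds$ because the coincidence above lets the integrand be pulled outside the $\pi(t)$-integral. For the tail, the substitution $\tau=s-h$ combined with $x^f(h)=x^{\tilde f}(h)$ a.s.\ expresses it as $e^{-\lambda h}\mathcal{E}(\pi,x^{\tilde f}(h,u_1),t,u_2)\le e^{-\lambda h}\bigl(V(t,x^{\tilde f}(h,u_1))+\eta\bigr)$. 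Letting $\eta\to 0$ and then taking the infimum over $u_1\in\mathcal U$ yields this direction. For the reverse inequality the argument is symmetric: starting from any $u\in\mathcal U$, the same splitting produces the same head and a tail equal to $e^{-\lambda h}\mathcal{E}(\pi,x^{\tilde f}(h,u),t,u(\cdot+h))\ge e^{-\lambda h}V(t,x^{\tilde f}(h,u))$ by the definition of $V$; taking the infimum over $u$ then gives $V(t,\tilde x)\ge\mathrm{RHS}$.

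The main technical obstacle is the rigorous decoupling of the tail: one must verify that $\int_h^\infty\int_X e^{-\lambda s}J(x^f(s),u(s-h))\pi(t)(df)\,ds$ genuinely factors as $e^{-\lambda h}$ times an instance of $\mathcal{E}$ with initial state $x^{\tilde f}(h)$ under the \emph{same} measure $\pi(t)$. Beyond the interval $[0,h]$ the absolute local learning is used only in the single a.s.\ equality $x^f(h)=x^{\tilde f}(h)$; given this, $\{x^f(h+\tau)\}_{\tau\ge 0}$ is, for $\pi(t)$-a.e.\ $f$, the unique solution of $\dot y=f(y,u(h+\tau))$ with $y(0)=x^{\tilde f}(h)$ (an initial condition independent of $f$), so Fubini and the time-shift argument used in Proposition~\ref{prop:s-dependence} make the factorization valid. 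That $\pi(t)$ reappears on both sides of \eqref{eqn:DPP} is a feature, not a bug: this is a \emph{local} DPP in which the agent's belief is frozen over the short horizon $[0,h]$.
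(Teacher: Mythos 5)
Your proposal is correct and follows essentially the same route as the paper: split the cost at time $h$, use the absolute local learning hypothesis to identify $x^f$ with $x^{\tilde f}$ on $[0,h]$ for $\pi(t)$-a.e.\ $f$, reduce the tail to $W(t,h,\tilde x(h,u)) = e^{-\lambda h}V(t,\tilde x(h,u))$ via Proposition~\ref{prop:s-dependence}, and prove the two inequalities with near-optimal (concatenated) controls and the definition of $V$ as an infimum. If anything, you are more explicit than the paper about the key a.s.\ coincidence of trajectories on $[0,h]$ (and about reading the smallness condition on $h$ as $h\cdot\sup_{f}|f|_\infty<\e$), steps the paper's proof uses implicitly.
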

where $\tilde{x}(s,u)$ has to be regarded as the solution of 
$$\dot{x}(s)=\tilde{f}(x(s),u(s)),\quad x(0)=\tilde{x},\quad s\in[0,h],\quad u\in\mathcal{U}.$$
\begin{proof}[Proof of Proposition \ref{prop:DPP}] 
By the definition of $V(t,\tilde{x})$, for every $\delta>0$, there exists $u^{\delta}\in \mathcal{U}$ such that 
\begin{equation}
  \begin{array}{ll}
    \int_{0}^{h} J(x_\delta^{\tilde{f}}(s),u^{\delta}(s))e^{-\lambda s}ds\\[5pt]
    +\int_{h}^{\infty}\int_{X}J(x_{\delta}^{f}(s),u^{\delta}(s))e^{-\lambda s}\pi(t)(df)ds \leq V(t,\tilde{x}) + \delta.
  \end{array}
  \end{equation}
  Here, $x_{\delta}^{\tilde{f}}(\cdot)$ is the solution of the problem
  \begin{align}
\dot{x}(s)= & \tilde{f}(x(s),\tilde u_{\delta}(s))\qquad s\in[0,h]\\
\tilde{x}(0)=& x_{0},
\end{align}
while $x_{\delta}^{f}(\cdot)$ solves 
  \begin{align}
\dot{x}(s)= & f(x(s),\tilde u_{\delta}(s))\qquad s\in[h,\infty]\\
x(t)=& \tilde{x}(h).
\end{align}
Taking the infimum on the left hand side over the controls varying on the interval $[h,\infty)$, we obtain
$$ \int_{0}^{h} J(x_\delta^{\tilde{f}}(s),u^{\delta}(s))e^{-\lambda s}ds+W(t,h,\tilde{x}(h)) \leq V(t,\tilde{x}) + \delta,$$
which, in view of Proposition \ref{prop:s-dependence}, can be written as
$$\int_{0}^{h} J(x_\delta^{\tilde{f}}(s),u^{\delta}(s))e^{-\lambda s}ds+e^{-\lambda h}V(t,\tilde{x}(h)) \leq V(t,\tilde{x}) + \delta.$$
Taking now the infimum over the control varying on the time interval $[0,h]$ and letting $\delta\rightarrow 0$ we obtain
\begin{align}
\inf_{u\in\mathcal{U}}&\Big\{\int_{0}^{h} J(x^{\tilde{f}}(s),u(s))e^{-\lambda s}ds+\\
&+e^{-\lambda h}V(t,\tilde{x}(h,u)) \Big\}\leq V(t,\tilde{x}).
\end{align}

We now aim at proving the reverse inequality. From the definition  $V(t,\tilde{x})$, it easily follows
 \begin{align}
&V(t,\tilde{x})\leq \int_{0}^{h}e^{-\lambda s}J(x^{\tilde{f}}(s), u(s))ds \\
&+ \int_{h}^\infty \int_X  e^{-\lambda s}J(x^f(s),u(s)) \pi(t)(df)ds
\end{align}
for every $u \in \mathcal{U}$. Arguing as in the previous step of the proof, we can take the infimum over controls varying on the time interval $[h,\infty)$ and use the relation $W(t,h,\tilde{x}(h,u))=e^{-\lambda h}V(t,\tilde{x}(h,u)$. This in particular provides the inequality
$$V(t,\tilde{x})\leq \int_{0}^{h}e^{-\lambda s}J(x^{\tilde{f}}(s), u(s))ds +e^{-\lambda h}V(t,\tilde{x}(h,u)).$$
Taking now the infimum over controls varying on the interval $[0,h]$, the inequality
$$V(t,\tilde{x})\leq\inf_{u\in\mathcal{U}}\Big\{\int_{0}^{h} J(x^{\tilde{f}}(s),u(s))e^{-\lambda s}ds+e^{-\lambda h}V(t,\tilde{x}(h,u))\Big\}$$
easily follows. This completes the proof.

\end{proof}

\section*{Differential equation for optimal trajectories}

In light of the dynamic programming principle in Proposition \ref{prop:DPP}, one can prove the following Hamilton-Jacobi equation using standard techniques:

\begin{theorem}
  Assume hypotheses (H2)-(H3). Suppose that $\pi(t)$ satisfies relation \eqref{eqn:abs-learn} in $B(x,r)$. Then, for $y \in B(x,r)$, the value function satisfies the Hamilton-Jacobi equation
  \begin{equation}
    \lambda V(t,y) = \inf_{u \in U} \left( J(y,u) + \nabla V(t,y) \cdot \tilde f( y,u) \right)
    \label{eqn:HJB-in-x}
  \end{equation}
in the sense of viscosity solutions.
\end{theorem}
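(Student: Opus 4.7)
The plan is to follow the now-standard dynamic programming route to the Hamilton-Jacobi equation: use Proposition \ref{prop:DPP} together with the Lipschitz regularity of $V$ established in Proposition \ref{prop:Lipschitz} to verify the viscosity sub- and supersolution inequalities separately. Throughout I work at a fixed time $t$ and a fixed point $y\in B(x,r)$, and I choose $h>0$ small enough that the trajectory $\tilde x(\cdot,u)$ starting at $y$ remains inside $B(x,r)$; since $\tilde f$ is bounded by $C$ (hypothesis (H3) inheriting (H1)), any $h<r/C$ works, in particular any $h$ inside the admissible range of Proposition \ref{prop:DPP}. The absolute local learning hypothesis \eqref{eqn:abs-learn} is essential here because it is what makes the local DPP collapse onto $\tilde f$, allowing the true state dynamics to appear in the Hamiltonian rather than an average over $\pi$.

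For the subsolution inequality, let $\varphi\in C^{1}$ be a test function such that $V-\varphi$ has a local maximum at $(t,y)$ with $V(t,y)=\varphi(t,y)$. Pick any constant control $u\in U$; the local DPP gives
\begin{equation}
\varphi(t,y)\leq \int_{0}^{h}e^{-\lambda s}J(\tilde x(s,u),u)\,ds + e^{-\lambda h}\varphi(t,\tilde x(h,u)),
\end{equation}
using $V\leq\varphi$ near $y$ for $h$ small. Writing $e^{-\lambda h}\varphi(t,\tilde x(h,u))-\varphi(t,y)=\int_{0}^{h}\frac{d}{ds}\bigl(e^{-\lambda s}\varphi(t,\tilde x(s,u))\bigr)\,ds$, dividing by $h$, and sending $h\to 0^{+}$ yields $\lambda\varphi(t,y)\leq J(y,u)+\nabla\varphi(t,y)\cdot\tilde f(y,u)$. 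Taking the infimum over $u\in U$ gives the desired subsolution bound.

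For the supersolution inequality, let $V-\varphi$ have a local minimum at $(t,y)$ with $V(t,y)=\varphi(t,y)$. For each $\delta>0$, the DPP produces a $\delta$-optimal control $u^{\delta}\in\mathcal{U}$ satisfying
\begin{equation}
\varphi(t,y)+\delta\geq \int_{0}^{h}e^{-\lambda s}J(\tilde x(s,u^{\delta}),u^{\delta}(s))\,ds+e^{-\lambda h}\varphi(t,\tilde x(h,u^{\delta})),
\end{equation}
using $V\geq\varphi$ near $y$. Rewriting the boundary term as an integral of its derivative along $\tilde x(\cdot,u^{\delta})$ and rearranging gives
\begin{equation}
\frac{\delta}{h}\geq \frac{1}{h}\int_{0}^{h}e^{-\lambda s}\bigl(J(\tilde x(s,u^{\delta}),u^{\delta}(s))+\nabla\varphi(t,\tilde x(s,u^{\delta}))\cdot\tilde f(\tilde x(s,u^{\delta}),u^{\delta}(s))-\lambda\varphi(t,\tilde x(s,u^{\delta}))\bigr)\,ds.
\end{equation}
Bounding the integrand below by the pointwise infimum over $u\in U$ and choosing $\delta=o(h)$ (say $\delta=h^{2}$), I pass $h\to 0^{+}$. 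Since $\tilde x(s,u^{\delta})\to y$ uniformly on $[0,h]$ (because $|\tilde f|\leq C$) and $\tilde f,J,\nabla\varphi$ are continuous, the running average converges to the value of the Hamiltonian at $y$, yielding $\lambda\varphi(t,y)\geq\inf_{u\in U}(J(y,u)+\nabla\varphi(t,y)\cdot\tilde f(y,u))$.

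The main obstacle is the supersolution step, because the $\delta$-optimal control $u^{\delta}$ is only a measurable function, so one cannot differentiate the DPP at $h=0$ against a constant control as in the subsolution case; the integral-of-derivative trick together with the uniform continuity of the Hamiltonian in $x$ (which follows from (H1), (H2), and smoothness of $\varphi$) is what closes the gap. One must also be careful that the local DPP of Proposition \ref{prop:DPP} really does supply the $\delta$-optimizer over the short interval $[0,h]$ only, which is why the absolute learning radius $\e$ and the uniform bound $|\tilde f|\leq C$ must be combined to keep the entire short trajectory inside the neighborhood where $\pi$-a.s. the dynamics coincide with $\tilde f$.
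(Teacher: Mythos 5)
Your proof is correct and follows exactly the standard DPP-to-viscosity-solution argument that the paper itself invokes: the paper states only that the theorem follows from Proposition \ref{prop:DPP} ``using standard techniques'' and supplies no written proof, so your argument is precisely the omitted content. Both the subsolution step via constant controls and the supersolution step via $\delta$-optimal controls with $\delta=o(h)$ are handled correctly, including the point that $h$ must be small enough (using $|\tilde f|\leq C$) to keep the short trajectory inside the region where the dynamics coincide $\pi$-a.s. with $\tilde f$.
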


\textbf{Remark:} The theory of viscosity solution for the equation \eqref{eqn:HJB-in-x} is well known (see, e.g. \cite{Bardi-Capuzzo}). For example, if one fixes the boundary values of $V$ on $\partial B(x,r)$, then there exists a unique viscosity solution of \eqref{eqn:HJB-in-x}. Of course in this setting those boundary values are not a priori known, and may be difficult to obtain. Even so, the Hamilton-Jacobi equation \eqref{eqn:HJB-in-x} provides important local information about the value function and the optimal control.

%
\vspace{0.1in}

The previous remark motivates the importance of providing a relation describing how the value function $V(t,x)$ evolves w.r.t. $t\in[0,\infty)$.

\begin{definition}
Given a scalar valued, lower semicontinuous function $g(\cdot)$, the strict sub-differential of $g$ at $t$ is defined as the set 
$$\hat{\partial}_{t}g(t)=\Big\{\xi\in\mathbb{R}:\,\limsup_{s\rightarrow t}\frac{\xi \cdot (s-t)-(g(s)-g(t))}{|s-t|}\leq 0\Big\}$$
\end{definition}

Now we demonstrate that one can establish a differential relation in $t$:

\begin{theorem}\label{thm:inclusion}
  Let us assume hypotheses (H1)-(H2) and that $\lambda>L$, where $L$ is the constant appearing in (H1). Furthermore, suppose that the mapping 
  $t\mapsto \pi(t)$ satisfies relation \eqref{eqn:pi-low_cont}. Then
  \begin{equation}\label{eqn:partial-t-exists}
    \hat{\partial}_{t} \Big(\int_0^\infty \int_X J(x^f(\tau),\sigma^*(\tau)) e^{-\lambda \tau} d\pi(t)d\tau\Big)
  \end{equation}
  is well-defined and satisfies the
  \begin{equation}\label{eqn:upper-t}
     \hat{\partial}_{t}V(t,x) \subseteq \hat{\partial}_{t} \Big(\int_0^\infty \int_X J(x^f(\tau),\sigma^*(\tau))e^{-\lambda \tau} d\pi(t)(df) \,d\tau\Big).
  \end{equation}
 \label{thm:t-derivative}
\end{theorem}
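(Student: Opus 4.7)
The plan is to exhibit an optimal control $\sigma^{*}$ for $V(t,x)$ via Corollary \ref{cor:existence}, define the auxiliary ``frozen'' functional
$$h(\tau):=\int_{0}^{\infty}\int_{X}e^{-\lambda s}J(x^{f}(s),\sigma^{*}(s))\,\pi(\tau)(df)\,ds,$$
and then deduce the sub-differential inclusion from the simple tangency relation $h(\tau)\geq V(\tau,x)$ with equality at $\tau=t$. This is the classical envelope/calculus-of-variations trick: the sub-optimal frozen-policy value dominates the true value function everywhere and touches it at the point of interest.

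First I would check that $\hat{\partial}_{t}h(t)$ is well-defined; in particular that $h$ is lower semicontinuous at $t$, so that the definition of strict sub-differential applies. Writing $h(\tau)=\int_{X}c(f)\,\pi(\tau)(df)$ with $c(f):=\int_{0}^{\infty}e^{-\lambda s}J(x^{f}(s),\sigma^{*}(s))\,ds$, it suffices to show $c\in C^{0}(X)$: for then assumption \eqref{eqn:pi-low_cont} directly yields $\liminf_{\tau\to t}h(\tau)\geq h(t)$. Continuity of $c$ follows from Proposition \ref{prop:cont-map}, part $ii)$, combined with the integrable majorant $e^{-\lambda s}|J(x_{1}^{f}(s),\sigma^{*}(s))-J(x_{2}^{f}(s),\sigma^{*}(s))|\leq \delta\,L_{J}\,s\,e^{-(\lambda-L)s}$ (this is exactly the estimate produced inside the proof of Proposition \ref{prop:cont-map}); since $\lambda>L$ this is integrable in $s$, so dominated convergence upgrades pointwise continuity to continuity of $c$ as a function on $X$.

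The core argument is then a two-line envelope inequality. Since $\sigma^{*}$ is admissible (though in general no longer optimal) for the problem defining $V(\tau,x)$, one has $h(\tau)\geq V(\tau,x)$ for every $\tau\geq 0$, while $h(t)=V(t,x)$ by optimality at $t$; consequently $h(\tau)-h(t)\geq V(\tau,x)-V(t,x)$ for all $\tau$. If now $\xi\in\hat{\partial}_{t}V(t,x)$, then for any $\epsilon>0$ there exists a neighborhood of $t$ on which $V(\tau,x)-V(t,x)\geq \xi(\tau-t)-\epsilon|\tau-t|$; chaining the two inequalities gives $h(\tau)-h(t)\geq \xi(\tau-t)-\epsilon|\tau-t|$ on the same neighborhood, hence $\limsup_{\tau\to t}\frac{\xi(\tau-t)-(h(\tau)-h(t))}{|\tau-t|}\leq \epsilon$. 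Letting $\epsilon\to 0$ yields $\xi\in\hat{\partial}_{t}h(t)$, which is the required inclusion \eqref{eqn:upper-t}.

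The most delicate step is the well-definedness part: the strict sub-differential is stated only for lower semicontinuous scalar functions, so one must carefully transport the weak-$*$ lower semicontinuity of $\pi(\tau)$ through an integral in both $s$ and $f$. The hypothesis $\lambda>L$ is precisely what makes the exponential decay in $s$ dominate the Gr\"onwall factor $e^{Ls}$ arising in Proposition \ref{prop:cont-map}, and thus what makes the frozen cost $c(f)$ a legitimate continuous test function in \eqref{eqn:pi-low_cont}. Once this regularity is in place, the tangency argument above is essentially forced.
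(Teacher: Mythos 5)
Your proposal is correct and follows essentially the same route as the paper: freeze the optimal control $\sigma^{*}$ at time $t$, use that the resulting functional dominates $V(\cdot,x)$ with equality at $t$, and transfer the sub-differential through this tangency inequality. The only difference is that you spell out the well-definedness step (continuity of $c(f)=\int_{0}^{\infty}e^{-\lambda s}J(x^{f}(s),\sigma^{*}(s))\,ds$ via Proposition \ref{prop:cont-map} and dominated convergence, so that \eqref{eqn:pi-low_cont} applies) in more detail than the paper, which simply asserts the lower semicontinuity.
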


\begin{proof}
Notice that, in view of the hypothesis \eqref{eqn:pi-low_cont}, the function 
$$t\mapsto\Big(\int_0^\infty \int_X J(x^f(\tau),\sigma(\tau)) e^{-\lambda \tau} d\pi(t)d\tau\Big)$$
is lower semicontinuous for every $\sigma\in\mathcal{R}$. So its strict sub-differential is always well-defined. Furthermore, it follows from Proposition \ref{prop:Lipschitz}, $i)$, that the mapping
$t\mapsto V(t,x)$ is lower semicontinuous for every $x\in\mathbb{R}^{n}$.
By the definition of $V$ and the existence of an optimal generalized control $\sigma^{*}$, it easily follows that
\begin{displaymath}
  V(t,x) = \int_0^\infty \int_X J(x^f(\tau),\sigma^*(\tau))d\pi(t)(df) \,d\tau.
\end{displaymath}
and
\begin{displaymath}
  V(s,x) \leq \int_0^\infty \int_X J(x^f(\tau),\sigma^*(\tau))d\pi(s)(df) \,d\tau.
\end{displaymath}
for any $s$ close to $t$.
Fix $\xi\in \hat{\partial}_{t}V(t,x)$. From the previous inequalities, we obtain that
\begin{align}
  &\frac{\xi\cdot (s-t)-(V(s,x) - V(t,x))}{|s-t|} \geq \\
  &\frac{\xi\cdot (s-t)-\int_0^\infty \int_X J(x^f(\tau),\sigma^*(\tau)) e^{-\lambda \tau} (\pi(s)- \pi(t)) (df) \,d\tau}{|s-t|} .
\end{align}
Taking the $\limsup$ on both sides for $s\rightarrow t$, we achieve relation \eqref{eqn:upper-t}. This concludes the proof.
\end{proof}

\textbf{Remarks:} \begin{itemize}
\item[$1)$]The previous characterizations of the value function $V(t,x)$ are well defined even when the mapping $t\mapsto V(t,x)$ is merely lower semicontinuous.
In particular, such a feature permits  to characterize the value function even when there is a \textit{discontinuity} in the learning process, that is when the updated measurements from the environment affect a drastic (i.e. discontinuous) change to the mapping $\pi(t)$. 
\item[$2$) ]Another interesting implication of Theorem \ref{thm:inclusion} is that, if the mapping $$t\mapsto \Big(\int_0^\infty \int_X J(x^f(\tau),\sigma(\tau)) e^{-\lambda \tau} d\pi(t)d\tau\Big)$$ is differentiable for each $\sigma\in\mathcal{R}$, then its strict sub-differential is a singleton. This fact in particular implies that the mapping $t\mapsto V(t,x)$ is differentiable, providing a further regularity result for the value function.
\end{itemize}

\section*{Conclusion}

In this work we have considered control problems with uncertainty in the system dynamics. This situation is closely related to a variety of models in reinforcement learning and robust control. In particular, we have rigorously proven a dynamic programming principles and differential equations satisfied by the value function in such systems. 
We hope that these rigorous results can provide an impetus for more precise analysis of these types of models in control and learning.

\section*{References}

\bibliography{Reinforce}

\begin{thebibliography}{10}
\expandafter\ifx\csname url\endcsname\relax
  \def\url#1{\texttt{#1}}\fi
\expandafter\ifx\csname urlprefix\endcsname\relax\def\urlprefix{URL }\fi
\expandafter\ifx\csname href\endcsname\relax
  \def\href#1#2{#2} \def\path#1{#1}\fi

\bibitem{AlphaGo}
D.~Silver, A.~Huang, C.~J. Maddison, A.~Guez, L.~Sifre, G.~Van Den~Driessche,
  J.~Schrittwieser, I.~Antonoglou, V.~Panneershelvam, M.~Lanctot, et~al.,
  Mastering the game of go with deep neural networks and tree search, Nature
  529~(7587) (2016) 484--489.

\bibitem{sutton1992}
R.~S. Sutton, A.~G. Barto, R.~J. Williams, Reinforcement learning is direct
  adaptive optimal control, IEEE Control Systems 12~(2) (1992) 19--22.

\bibitem{BartoSuttonBook}
R.~S. Sutton, A.~G. Barto, Reinforcement learning: An introduction, Vol.~1, MIT
  press Cambridge, 1998.

\bibitem{Munos}
R.~Munos, A study of reinforcement learning in the continuous case by the means
  of viscosity solutions, Machine Learning 40~(3) (2000) 265--299.

\bibitem{Doya}
K.~Doya, Reinforcement learning in continuous time and space, Neural
  computation 12~(1) (2000) 219--245.

\bibitem{Duff}
M.~Duff, Optimal learning: Computational procedures for {B}ayes-adaptive
  {M}arkov decision processes., Ph.D. thesis, University of Massachusetts,
  Amherst (2002).

\bibitem{Poupart}
P.~Poupart, N.~Vlassis, J.~Hoey, K.~Regan, An analytic solution to discrete
  {B}ayesian reinforcement learning, in: Proceedings of the 23rd international
  conference on Machine learning, ACM, 2006, pp. 697--704.

\bibitem{BRL-Survey}
M.~Ghavamzadeh, S.~Mannor, J.~Pineau, A.~Tamar, et~al., Bayesian reinforcement
  learning: A survey, Foundations and Trends{\textregistered} in Machine
  Learning 8~(5-6) (2015) 359--483.

\bibitem{Killian}
T.~W. Killian, S.~Daulton, F.~Doshi-Velez, G.~Konidaris, Robust and efficient
  transfer learning with hidden parameter {M}arkov decision processes, in:
  Advances in Neural Information Processing Systems, 2017, pp. 6251--6262.

\bibitem{PILCO}
M.~Deisenroth, C.~E. Rasmussen, Pilco: A model-based and data-efficient
  approach to policy search, in: Proceedings of the 28th International
  Conference on machine learning (ICML-11), 2011, pp. 465--472.

\bibitem{Chowdhary}
G.~Chowdhary, H.~A. Kingravi, J.~P. How, P.~A. Vela, A {B}ayesian nonparametric
  approach to adaptive control using {G}aussian processes, in: Decision and
  Control (CDC), 2013 IEEE 52nd Annual Conference on, IEEE, 2013, pp. 874--879.

\bibitem{Jain2017}
A.~Jain, T.~X. Nghiem, M.~Morari, R.~Mangharam, Learning and control using
  {G}aussian processes, in: Proceedings of the 9th ACM/IEEE International
  Conference on Cyber-Physical Systems, 2017.

\bibitem{BasarBook}
T.~Ba{\c{s}}ar, P.~Bernhard, H-infinity optimal control and related minimax
  design problems: a dynamic game approach, Springer Science \& Business Media,
  2008.

\bibitem{BoltyanskyPoznyak}
V.~G. Boltyansky, A.~G. Poznyak, The Robust Maximum Principle, Boston
  Birkh{\"a}user, 2012.

\bibitem{Warga91}
J.~Warga, Nonsmooth problems with conflicting controls, SIAM journal on control
  and optimization.

\bibitem{MP}
M.~Palladino, Necessary conditions for adverse control problems expressed by
  relaxed derivatives, Set-Valued Var. Anal 24~(4).

\bibitem{Bettiol-Khalil}
N.~K. P.~Bettiol, Necessary optimality conditions for average cost minimization
  problems, submitted.

\bibitem{Vinter2005}
R.~B. Vinter, Minimax optimal control, SIAM journal on control and optimization
  44~(3) (2005) 939--968.

\bibitem{Ross2015}
I.~M. Ross, R.~J. Proulx, M.~Karpenko, Q.~Gong, Riemann--{S}tieltjes optimal
  control problems for uncertain dynamic systems, Journal of Guidance, Control,
  and Dynamics.

\bibitem{Feldbaum}
A.~Feldbaum, Dual control theory. i-iv, Avtomatika i Telemekhanika 21~(9)
  (1960-1961) 1240--1249.

\bibitem{KlenskeHennig}
E.~D. Klenske, P.~Hennig, Dual control for approximate {B}ayesian reinforcement
  learning, arXiv preprint arXiv:1510.03591.

\bibitem{WargaBook}
J.~Warga, Optimal control of differential and functional equations, Academic
  press, 2014.

\bibitem{Bardi-Capuzzo}
M.~Bardi, I.~Capuzzo-Dolcetta, Optimal Control and Viscosity Solutions of
  Hamilton-Jacobi-Bellman Equations, Birkhauser, 1997.

\end{thebibliography}

\end{document}